\definecolor{BleuTresFonce}{rgb}{0.215, 0.215, 0.36}
\definecolor{EgyptianBlue}{rgb}{0.06, 0.2, 0.65}
\definecolor{DeepViolet}{rgb}{0.3, 0.0, 0.5}
\definecolor{RoyalPurple}{rgb}{0.47, 0.32, 0.66}
\definecolor{DarkViolet}{rgb}{0.0, 0.5, 0.0}
\numberwithin{equation}{subsection}
\newtheorem*{conjecture*}{Conjecture}
\newtheorem{theorem}[equation]{Theorem}
\newtheorem*{theorem*}{Theorem}
\newtheorem{thm}[equation]{Theorem}
\newtheorem{defi}[equation]{Definition}
\newtheorem{lemma}[equation]{Lemma}
\newtheorem{prop}[equation]{Proposition}
\newtheorem{prop-def}[equation]{Proposition-Definition}
\newtheorem{corollary}[equation]{Corollary}
\theoremstyle{definition}
\newtheorem*{notation*}{Notations}
\newtheorem*{example*}{Example}
\newtheorem*{claim*}{Claim}
\newcommand{\qbinom}[3]{\genfrac{[}{]}{0pt}{}{#1}{#2}_{#3}}
\newcommand{\bP}{{\mathbb P}}
\title{Finite Version of the $q$-Analogue of de Finetti's Theorem}
\author[A.\,Dordzhiev]{Adyan Dordzhiev}
\email{aedordzhiev@gmail.com}
\begin{document} 
\begin{abstract}
Let $q \in (0,1)$. We formulate an asymptotic version of the $q$-analogue of de Finetti's theorem. Using the convex structure of the space of $q$-exchangeable probability measures, we show that the optimal rate of convergence is of order $q^n$.
\end{abstract}
\maketitle
\tableofcontents

\section{Introduction}
\noindent Let \( S(\infty) \) denote the group of permutations of the natural numbers that move only finitely many elements. A random sequence \( X_1, X_2, X_3, \ldots \) is \textit{exchangeable} if permuting finitely many indices does not change the law of the sequence. That is, for any finite permutation \( \sigma \in S(\infty) \),
\[
(X_1, X_2, X_3, \ldots) \overset{d}{=} (X_{\sigma(1)}, X_{\sigma(2)}, X_{\sigma(3)}, \ldots).
\] The celebrated de Finetti's theorem states that an infinite random $\{0,1\}$-valued  exchangeable sequence is a mixture of i.i.d.\ Bernoulli sequences. In other words, the space of exchangeable probability measures on \( \{0,1\}^\infty \) is isomorphic (as a convex set) to the space of all Borel probability measures on \( [0,1] \). The isomorphism is given by the following formula
\begin{equation}
\label{classicalversion}
\mathbb{P} (X_{1} = 1, \ldots, X_{k} = 1, X_{k+1} = 0, \ldots, X_{n} = 0) \coloneqq \int_{0}^{1} p^{k} (1-p)^{n-k} \mu(dp).
\end{equation}
De Finetti's theorem can be extended to more general settings~\cite{f8ab7edc-b800-37cb-843f-446c06ee989b}. The theorem can be proved by establishing a connection with the Hausdorff moment problem~\cite{feller1971introduction}. Another proof can be obtained by the moment method~\cite{kirsch2018elementaryprooffinettistheorem}. There is also an alternative approach based on harmonic functions on the Pascal graph~\cite{Borodin_Olshanski_2016}.

In \cite{gnedin2009qanalogue},  \cite{Gnedin_2010} a deformation of the concept of classical exchangeability was studied.  
\begin{defi}
For $q > 0$, a probability measure $\mathbb{P}$ on $\{0, 1\}^{\infty}$ is $q$-exchangeable if for any $\varepsilon_{1},\ldots,\varepsilon_{n} \in \{0, 1\}^{\infty}$ and elementary transposition $(i,i+1)$,
\begin{equation}\label{def:q_exch}
\mathbb{P}(\varepsilon_{1},\ldots,\varepsilon_{i-1},\varepsilon_{i+1},\varepsilon_{i},\varepsilon_{i+2},\ldots,\varepsilon_{n}) = q^{\varepsilon_{i}-\varepsilon_{i+1}} \mathbb{P}(\varepsilon_{1},\ldots,\varepsilon_{n}).
\end{equation}
\end{defi}
\noindent In other words, each additional inversion introduces an exponential penalty governed by the parameter $q$. For $q \in (0,1)$, a $q$-analogue of de Finetti's theorem for this type of probability measures has been established in~\cite{gnedin2009qanalogue}. See section~~\eqref{sec:q-exch} for a detailed discussion.

The infinite nature of the phase space \( \{0,1\}^\infty \) plays a crucial role in both formulations, see the introduction of~\cite{10.1214/aop/1176994663} for a counterexample. However, de Finetti's theorem can also be obtained as a limit of the finite version \( \{0,1\}^n \) as \( n \to \infty \). It was shown in~\cite{10.1214/aop/1176994663}  that this convergence occurs at an optimal rate of order $1/n$.
In this note, we obtain a finite version of the $q$-analogue of de Finetti's theorem, in the spirit of~\cite{10.1214/aop/1176994663}, with convergence at the sharp rate of order~$q^n$.

\textbf{Acknowledgments.}
I am deeply grateful to Grigori Olshanski for suggesting the problem and for his guidance throughout this work.
\section{Preliminaries}

\subsection{\texorpdfstring{$q$}{q}-Exchangeability} \label{sec:q-exch} Assume $q \in (0, 1)$. We use the standard notation for the $q$-integer, $q$-factorial, $q$-binomial coefficient, and  $q$-Pochhammer symbol, respectively,
\[[n] \coloneqq \frac{1-q^{n}}{1-q}, \;\; [n]! \coloneqq [1]\cdot[2], \ldots\cdot[n], \;\;\qbinom{n}{k}{} \coloneqq \dfrac{[n]!}{[k]![n-k]!}, \] 
\[(x;q)_{n} \coloneqq \prod_{i=0}^{n-1} (1-xq^{i}), \;\; 0\leqslant n \leqslant \infty,\]
where $(x;q)_{0} \coloneqq 1$. Since $q \in (0,1)$, the $q$-Pochhammer symbol is well-defined for $n = \infty$.

For a given finite sequence $\omega \in \{0,1\}^{n}$, consider the number of inversions in $\omega$
\begin{equation*}
    \mathrm{inv}(\omega) := \#\{(i, j) : 1 \leq i < j \leq n \ \text{and} \ \omega_{i} > \omega_{j}\}.
\end{equation*}
Denote by $C_{n,k} := \{\omega \in \{0,1\}^{n} : \sum_{i=1}^{n} \omega_{i} = k\}$ the set of all binary sequences of length $n$ containing exactly $k$ ones. Consider the sequence $s_{n,k} := (1,1,\ldots,1,0,0,\ldots,0)$ in $C_{n,k}$, which has ones in the first $k$ positions and zeros in the remaining $n-k$ positions. This sequence has the largest number of inversions in $C_{n,k}$. Each $q$-exchangeable measure $\mathbb{P}$ on $\{0,1\}^{n}$ is defined by the following equation
\begin{equation}\label{def:fin_q_ex}
\bP(\sigma \cdot \omega) = q^{\text{inv}(\omega) - \text{inv}(\sigma \cdot \omega)} \bP(\omega), \quad \omega \in \{0,1\}^{n}, \sigma \in S(n).
\end{equation}
In particular, each $q$-exchangeable measure on $\{0,1\}^{n}$ is determined by its values on the family of sequences $\{s_{n,k}\}_{n,k}$, since
\begin{equation}
\mathbb{P}(\omega) = q^{\mathrm{inv}(\omega)} \, \mathbb{P}(s_{n,k}), \quad \omega \in C_{n,k}.
\end{equation}
Note that equation~\eqref{def:fin_q_ex} can be extended to the case where $\omega \in \{0,1\}^{\infty}$ and $\sigma \in S(\infty)$. It is still equivalent to~\eqref{def:q_exch}, since the difference $\mathrm{inv}(\omega) - \mathrm{inv}(\sigma \cdot \omega)$ is finite whenever $\sigma \in S(\infty)$.

We now prove a useful property of the  function $\text{inv}(\omega)$.
\begin{prop}
\begin{equation}\label{inv_comput}
\sum_{\omega\in C_{n,k}} q^{\text{inv}(\omega)} = \qbinom{n}{k}{}.    
\end{equation}
\end{prop}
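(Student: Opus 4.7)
My plan is to prove the identity by induction on $n$, using the standard $q$-Pascal recurrence
\[
\qbinom{n}{k}{} = \qbinom{n-1}{k-1}{} + q^{k}\,\qbinom{n-1}{k}{},
\]
and matching it against a natural combinatorial splitting of $C_{n,k}$ by the value of the last coordinate $\omega_n$.

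First I would dispose of the base cases: for $k = 0$ the set $C_{n,0}$ consists of the single sequence $(0,\dots,0)$ with no inversions, and for $k = n$ the set $C_{n,n}$ consists of $(1,\dots,1)$ with no inversions, both matching $\qbinom{n}{0}{} = \qbinom{n}{n}{} = 1$. For the inductive step I partition $C_{n,k}$ according to whether $\omega_n = 1$ or $\omega_n = 0$. The key observation is how the statistic $\mathrm{inv}$ behaves under this splitting: if $\omega_n = 1$ then no pair $(i,n)$ is an inversion, so $\mathrm{inv}(\omega)$ equals $\mathrm{inv}(\omega_1,\dots,\omega_{n-1})$ on an element of $C_{n-1,k-1}$; if $\omega_n = 0$ then every one of the $k$ positions $i < n$ with $\omega_i = 1$ contributes a new inversion with the last coordinate, so $\mathrm{inv}(\omega) = k + \mathrm{inv}(\omega_1,\dots,\omega_{n-1})$ with $(\omega_1,\dots,\omega_{n-1}) \in C_{n-1,k}$.

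Summing and factoring gives
\[
\sum_{\omega \in C_{n,k}} q^{\mathrm{inv}(\omega)}
 = \sum_{\omega' \in C_{n-1,k-1}} q^{\mathrm{inv}(\omega')} + q^{k}\sum_{\omega' \in C_{n-1,k}} q^{\mathrm{inv}(\omega')},
\]
and applying the inductive hypothesis together with the $q$-Pascal recurrence identifies this with $\qbinom{n}{k}{}$. I do not expect any genuine obstacle: the whole argument is a one-line bookkeeping of inversions created by appending a $0$ versus a $1$. The only point deserving mild care is the verification that the $q$-Pascal recurrence I quote is the one matching my split (as opposed to the symmetric recurrence $\qbinom{n}{k}{} = q^{n-k}\qbinom{n-1}{k-1}{} + \qbinom{n-1}{k}{}$, which would correspond to splitting on the first coordinate instead of the last); either choice works, but one must be consistent.
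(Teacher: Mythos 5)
Your proposal is correct and follows essentially the same route as the paper: both split $C_{n,k}$ by the last coordinate, observe that appending a $0$ creates $k$ new inversions while appending a $1$ creates none, and match the resulting recurrence with the $q$-Pascal rule $\qbinom{n}{k}{} = q^{k}\qbinom{n-1}{k}{} + \qbinom{n-1}{k-1}{}$. Your version merely spells out the base cases and the inversion bookkeeping more explicitly than the paper does.
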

\begin{proof}
The $q$-binomial coefficient is uniquely determined by the following recurrence relation
\begin{equation*}
\qbinom{n}{k}{} = q^{k} \qbinom{n-1}{k}{} + \qbinom{n-1}{k-1}{}.
\end{equation*}
By forgetting the last entry in each sequence, we obtain the decomposition
\[
C_{n,k} = C_{n-1,k-1} \sqcup C_{n-1,k},
\]
and as a result, we have
\begin{equation*}
\sum_{\omega\in C_{n,k}} q^{\mathrm{inv}(\omega)} = q^{k}\sum_{\omega\in C_{n-1,k}} q^{\mathrm{inv}(\omega)} +  \sum_{\omega\in C_{n-1,k-1}} q^{\mathrm{inv}(\omega)}.
\end{equation*}
This identity coincides with the recurrence relation defining the $q$-binomial coefficient.

\end{proof}

\noindent In \cite{gnedin2009qanalogue}, a $q$-analogue of de Finetti's theorem was established. Consider the $q$-analogue of the interval $[0, 1]$ \[\Delta_{q} \coloneqq \{1, q, q^{2}, \ldots\} \cup \{0\}.\] 
 
\noindent
For each \( x \in \Delta_q \), we define a \( q \)-analogue of the Bernoulli measure $\nu^{q}_{x}$ on \( \{0,1\}^\infty \) and \( \{0,1\}^n \) as the unique $q$-exchangeable measure whose values on standard cylinder sets are assigned according to the formula
\begin{equation} \label{q-bernoulli}
    \nu^{q}_{x}(X_{1} = 1,\ldots,X_{k}=1,X_{k+1}=0,\ldots,X_{n}=0) \coloneqq q^{-k(n-k)} x^{n-k} (x; q^{-1})_{k}.
\end{equation}
Interpreting $x \in \Delta_{q}$ as the probability of a zero, the polynomial defined in ~\eqref{q-bernoulli} plays the role of a 
$q$-analogue for the binomial term $x^k (1 - x)^{n - k}$.
\begin{theorem}(Gnedin-Olshanski)\label{qversion_theorem}
$q$-exchangeable probability measures on $\{0, 1\}^{\infty}$ are in one-to-one correspondence with probability measures on $\Delta_{q}$. The bijection has the form
\begin{equation}
\label{qversion}
\mathbb{P}  \coloneqq \int_{\Delta_{q}}\nu^{q}_{x} \, \mu(dx),
\end{equation}
\end{theorem}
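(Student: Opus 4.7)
The approach is to reformulate the theorem as a statement about nonnegative harmonic functions on a graded graph and then classify the extreme points of the resulting convex set. First, observe that a $q$-exchangeable measure $\mathbb{P}$ is determined by the values $\phi_n(k) \coloneqq \mathbb{P}(s_{n,k})$, and the additivity identity $\mathbb{P}(s_{n,k}) = \mathbb{P}(s_{n,k},0) + \mathbb{P}(s_{n,k},1)$, together with $(s_{n,k},0) = s_{n+1,k}$ and the inversion count $\mathrm{inv}(s_{n,k},1) = k(n-k) = \mathrm{inv}(s_{n+1,k+1}) - (n-k)$, yields via \eqref{def:fin_q_ex} the recursion
\begin{equation*}
\phi_n(k) \;=\; \phi_{n+1}(k) \;+\; q^{n-k}\,\phi_{n+1}(k+1), \qquad \phi_0(0)=1, \quad \phi_n(k)\ge 0.
\end{equation*}
Conversely, any such sequence extends uniquely to a $q$-exchangeable probability measure on $\{0,1\}^\infty$, so the theorem is equivalent to a description of the simplex of normalized nonnegative solutions of this recursion.

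Next I would verify by direct computation that, for each $x \in \Delta_q$, the sequence $\phi_n^x(k) \coloneqq q^{-k(n-k)} x^{n-k} (x;q^{-1})_k$ is a nonnegative solution; the recursion follows from $(x;q^{-1})_{k+1} = (1 - xq^{-k})(x;q^{-1})_k$ after straightforward algebra, and the restriction $x \in \Delta_q$ is precisely what prevents the factors $1 - xq^{-i}$ from producing sign changes (for $x = q^j$ a zero factor truncates the sequence at $k = j+1$). The total mass $\sum_k \binom{n}{k}_q q^{k(n-k)}\phi_n^x(k)=1$ follows from a $q$-binomial identity.

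The main obstacle is the converse: showing that every extreme point of the above simplex is of the form $\phi^x$ for some $x \in \Delta_q$. For this I would invoke the Vershik--Kerov ergodic method, which identifies extreme harmonic functions with pointwise limits
\begin{equation*}
\phi(n,k) \;=\; \lim_{N\to\infty}\frac{\mathrm{dim}_q\!\left((n,k)\to (N,K_N)\right)}{\mathrm{dim}_q\!\left((0,0)\to (N,K_N)\right)}
\end{equation*}
along ``generic'' sequences $(K_N)$, where $\mathrm{dim}_q$ denotes the weighted path count on the $q$-Pascal graph with vertex set $\{(n,k) : 0 \le k \le n\}$, unit-weight edge $(n,k) \to (n+1,k)$, and weight $q^{n-k}$ on the edge $(n,k) \to (n+1,k+1)$. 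A short bookkeeping argument gives $\mathrm{dim}_q((n,k)\to(N,K)) = q^{(K-k)(n-k)}\binom{N-n}{K-k}_q$ (which reduces to $\binom{N}{K}_q$ for $(n,k)=(0,0)$, consistent with \eqref{inv_comput}), so the classification reduces to the asymptotics of ratios of $q$-binomials. Using $\binom{N}{j}_q \to 1/(q;q)_j$ as $N\to\infty$ for fixed $j$, one shows that the only possible limits occur when $K_N$ is eventually equal to some fixed $j \ge 0$ (yielding $\phi^{q^j}$) or when $K_N = N$ eventually (yielding $\phi^0$); any intermediate regime with $K_N$ and $N-K_N$ both unbounded degenerates to $\phi^0$. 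Thus the extreme boundary is exactly $\{\phi^x : x \in \Delta_q\}$. Continuity of $x \mapsto \phi^x$ and compactness of $\Delta_q$ make the convex set a Bauer simplex, and Choquet's theorem then delivers the unique integral representation \eqref{qversion}.
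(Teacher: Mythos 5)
Your argument is essentially correct, but it is a genuinely different route from the one this paper takes. Here Theorem~\ref{qversion_theorem} is quoted from Gnedin--Olshanski and then \emph{rederived} in Section 3.3 as a corollary of the quantitative finite form: injectivity of \eqref{qversion} is automatic because $\Delta_q$ is compact and measures on it are determined by their moments, and surjectivity follows by approximating each projection $\mathbb{P}_n$ by a mixture $\mathbb{P}_{\mu_n,n}$ to within $c_k\,q^n$ (Theorem~\ref{qfinform}) and extracting a weak limit point of the $\mu_n$. That route buys a sharp approximation rate and never has to identify the extreme points of the infinite-dimensional convex set. Your route --- recasting $q$-exchangeable laws as normalized nonnegative harmonic functions on the $q$-Pascal graph and running the Vershik--Kerov ergodic method --- is essentially the original Gnedin--Olshanski argument (the $q$-deformation of the Pascal-graph proof of de Finetti mentioned in the introduction); it buys a direct identification of the extreme measures $\nu^q_x$ and a structural explanation of why the boundary is the discrete set $\Delta_q$ (nondegenerate limits of $\qbinom{N-n}{K-k}{}\big/\qbinom{N}{K}{}$ occur only when $K_N$ stabilizes), but it yields no rate of convergence.

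Three small repairs to your sketch. First, with the edge weights as you set them up, the level-$n$ normalization reads $\sum_k \qbinom{n}{k}{}\,\phi^x_n(k)=1$; the extra factor $q^{k(n-k)}$ you inserted does not belong (test $n=2$, where your version gives $1+(q-q^{-1})x(1-x)$). Second, your trichotomy on $(K_N)$ is not exhaustive: you must pass to subsequences and also treat $K_N\to\infty$ with $N-K_N$ bounded, though that regime likewise degenerates to $\phi^0$ because $q^{(K_N-k)(n-k)}\to 0$ for $k<n$. Third, the two ergodic-method inputs you invoke --- that every extreme point arises as such a vertex limit, and that the set of normalized nonnegative harmonic functions on a branching graph is a Choquet simplex (so the representing measure is unique) --- are standard but are doing real work and should be cited rather than asserted.
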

\noindent The classical version corresponds to the limit $q \to 1$. As $q$ increases, the set $\Delta_{q}$ becomes denser, and at $q =1$, it fills the entire interval $[0, 1]$.

\subsection{Finite form of classical version} We recall the main result from \cite{10.1214/aop/1176994663}. Given a probability measure $\mu$ on $[0,1]$, define a probability measure $\mathbb{P}_{\mu,n}$ on $\{0, 1\}^{n}$ as
\begin{equation}\label{finclintegral}
    \mathbb{P}_{\mu,n} (A) \coloneqq \int_{0}^{1} \nu_{p}(A) \mu(dp), \quad A \subset \{0,1\}^{n},
\end{equation}
 where $\nu_{p}$ denotes the Bernoulli measure on $\{0, 1\}^{n}.$ Recall that the map  
$\mu \mapsto \bP_{\mu,n}$
 is not surjective.

Let $\pi_k$ denote the canonical projection from $\{0,1\}^n$ onto its first $k$ coordinates, and let $\mathbb{P}_k$ denote the pushforward of $\mathbb{P}$ under $\pi_k$. Clearly, $(\mathbb{P}_{\mu,n})_k = \mathbb{P}_{\mu,k}$. The variational distance between two probability measures $\mu$ and $\nu$ on $(\Omega, \mathcal{F})$ is defined as
\begin{equation*}
\|\mu - \nu\| \coloneqq 2\, \sup\limits_{A \in \mathcal{F}} |\mu(A) - \nu(A)|.
\end{equation*}

\begin{thm}(Diaconis-Freedman)\label{classicalfinform}
Let $\mathbb{P}$ be an exchangeable measure on $\{0, 1\}^{n}$. Then there exists a probability measure $\mu$ on $[0, 1]$ such that 
\begin{equation}
    \|\mathbb{P}_{k}-\mathbb{P}_{\mu,k}\| \leqslant \frac{4k}{n}, \quad \text{for all } k \leqslant n,
\end{equation}
and this rate of convergence is sharp.
\end{thm}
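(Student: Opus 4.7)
My plan is to combine the finite-dimensional de Finetti decomposition of exchangeable measures on $\{0,1\}^n$ with a direct quantitative comparison between sampling with and without replacement. The space of exchangeable probability measures on $\{0,1\}^n$ is a simplex whose extreme points are the uniform measures $U_{n,j}$ on the level sets $C_{n,j}$, so any such $\mathbb{P}$ admits a unique decomposition $\mathbb{P}=\sum_{j=0}^{n}w_{j}\,U_{n,j}$ with $w_{j}:=\mathbb{P}(C_{n,j})$. I would take the candidate mixing measure to be the atomic probability measure $\mu:=\sum_{j=0}^{n}w_{j}\,\delta_{j/n}$ on $[0,1]$, which makes $\mathbb{P}_{\mu,k}=\sum_{j}w_{j}\,\nu_{j/n}^{\otimes k}$ a finite mixture of i.i.d.\ Bernoulli laws supported on the rational grid $\{j/n\}_{j=0}^n$.

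By convexity of the total variation norm,
\[
\|\mathbb{P}_{k}-\mathbb{P}_{\mu,k}\|\leq\sum_{j=0}^{n}w_{j}\,\|(U_{n,j})_{k}-\nu_{j/n}^{\otimes k}\|,
\]
so it suffices to establish the per-level estimate $\|(U_{n,j})_{k}-\nu_{j/n}^{\otimes k}\|\leq 4k/n$ uniformly in $j$. Both measures on $\{0,1\}^k$ are themselves exchangeable; any exchangeable $\rho$ on $\{0,1\}^{k}$ satisfies $\rho(\omega)=f(|\omega|)/\binom{k}{|\omega|}$ where $f$ is the law of $|\omega|$, and a direct calculation yields $\|\rho_{1}-\rho_{2}\|=\|f_{1}-f_{2}\|$. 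The problem thus reduces to proving the inequality $\|\mathrm{Hyp}(n,j,k)-\mathrm{Bin}(k,j/n)\|\leq 4k/n$, that is, a quantitative comparison of the hypergeometric and binomial laws on $\{0,1,\ldots,k\}$.

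The heart of the argument is this last estimate, and it is the main obstacle. I would approach it by constructing a coupling $(W,Y)$ on a common probability space: let $Y_{1},\ldots,Y_{k}$ be i.i.d.\ uniform on the $n$ labelled balls, and set $W_{i}:=Y_{i}$ whenever $Y_{i}$ is new, drawing $W_{i}$ uniformly from the yet-unused balls otherwise; the colour projections of $W$ and $Y$ are then distributed as $(U_{n,j})_{k}$ and $\nu_{j/n}^{\otimes k}$ respectively, and the total variation distance is bounded by the probability of a colour disagreement. A naive union bound over collisions only gives the quadratic $k(k-1)/(2n)$, so to reach the linear rate $4k/n$ one must exploit that many collisions are colour-preserving and that the colour frequencies in the unused urn drift by at most one per step; an alternative, more computational route is to bound the pointwise ratio $\mathrm{Hyp}(n,j,k)[\ell]/\mathrm{Bin}(k,j/n)[\ell]$ and sum.

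Sharpness of the rate follows by testing the inequality on an extreme measure $\mathbb{P}=U_{n,j}$ with $j$ of order $n$: the candidate $\mu$ is then essentially forced to be the point mass $\delta_{j/n}$, and an explicit evaluation of $\|\mathrm{Hyp}(n,j,k)-\mathrm{Bin}(k,j/n)\|$ exhibits a matching lower bound of order $k/n$, showing that the rate cannot be improved beyond a universal constant.
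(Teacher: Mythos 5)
A preliminary remark on the comparison: the paper does not prove this statement at all --- it is quoted, with attribution, from Diaconis and Freedman, so the only ``proof in the paper'' is the citation. Your outline does, in fact, follow the route of the original Diaconis--Freedman argument: decompose $\mathbb{P}$ over the extreme points $U_{n,j}$ (the uniform measures on the level sets $C_{n,j}$), take $\mu=\sum_j w_j\,\delta_{j/n}$, use convexity of the variation norm to reduce to a single level, and then use the identity $\|\rho_1-\rho_2\|=\|f_1-f_2\|$ for exchangeable laws on $\{0,1\}^k$ to reduce further to comparing the hypergeometric and binomial distributions on $\{0,\ldots,k\}$. All of these reductions are correct.

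The gap is that the heart of the theorem --- the uniform estimate $\|\operatorname{Hyp}(n,j,k)-\operatorname{Bin}(k,j/n)\|\leqslant 4k/n$ --- is precisely what you do not prove. You correctly observe that the standard collision coupling only yields $2\binom{k}{2}/n=k(k-1)/n$, which is quadratic in $k$ and therefore does not imply the stated bound, and you then gesture at two possible repairs (exploiting colour-preserving collisions, or bounding the pointwise ratio and summing) without carrying either out. Since everything beyond soft convexity lives in this one inequality, the proposal as written reduces the theorem to its hardest step rather than solving it; for the record, Diaconis--Freedman obtain the linear rate $2ck/n$ for $c$ colours by directly estimating $\sum_{\ell}\bigl(B(\ell)-H(\ell)\bigr)^{+}$, and the improvement over $k(k-1)/n$ genuinely uses the finiteness of the colour set. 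Your sharpness argument has a second, smaller gap: exhibiting a level measure $U_{n,j}$ for which the canonical choice $\mu=\delta_{j/n}$ gives distance of order $k/n$ does not establish optimality. You must show that \emph{every} probability measure $\mu$ on $[0,1]$ satisfies $\|(U_{n,j})_k-\mathbb{P}_{\mu,k}\|\geqslant c/n$ --- for instance by a second-moment comparison at $k=2$, using $\int p^2\,\mu(dp)\geqslant\bigl(\int p\,\mu(dp)\bigr)^2$ against the strictly subquadratic value $\tfrac{j(j-1)}{n(n-1)}$ --- since nothing a priori forces the approximating mixture to be a point mass.
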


\subsection{Extreme measures} The spaces of exchangeable and $q$-exchangeable probability measures are convex and compact; hence, by Choquet's theorem, they are the closed convex hulls of their extreme points. 

\begin{prop}
\begin{enumerate}
    \item Let $\Omega = \{0, 1\}^\infty$. The extreme points of the set of exchangeable probability measures on $\Omega$ are precisely the Bernoulli measures $\nu_p$, with $p \in [0, 1]$. For $q$-exchangeable measures, the extreme ones are parametrized by $x \in \Delta_q$ and are given by the measures $\nu^q_x$ defined in ~\eqref{q-bernoulli}.
    \item Let $\Omega = \{0, 1\}^n$. In this case, the sets of extreme exchangeable and $q$-exchangeable measures are finite. The extreme $q$-exchangeable measures, denoted by $e^q_0, e^q_1, \ldots, e^q_n$, are given by the formula

   \[ \begin{aligned}
        e^{q}_k (\omega) &=
        \begin{cases}
            q^{\text{inv}(\omega)}\dfrac{1}{\qbinom{n}{k}{}}, & \text{if } \omega \text{ contains } k \text{ ones and } n-k \text{ zeros}, \\
            0, & \text{otherwise.}
        \end{cases}
    \end{aligned}\]
    Setting $q = 1$, we obtain the extreme measures in the classical exchangeable case.
\end{enumerate}
\begin{proof}
Claim~(1) follows immediately from the bijections in~\eqref{classicalversion} and~\eqref{qversion}. For Claim~(2), note that due to $q$-exchangeability, the probability depends only on the number of ones, up to the scalar factor $q^{\operatorname{inv}(\omega)}$. This shows that each $e^q_k$ is extreme.
\end{proof}
\end{prop}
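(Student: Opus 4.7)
The plan is to handle the two claims by different means: Claim~(1) by transporting the extremality question through the integral representations already recalled, and Claim~(2) by exhibiting an explicit finite-dimensional simplex decomposition using only the $q$-exchangeability relation and the counting identity~\eqref{inv_comput}.

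For Claim~(1), the strategy is as follows. The Gnedin--Olshanski correspondence $\Phi \colon \mu \mapsto \int_{\Delta_q} \nu^q_x \, \mu(dx)$ from Theorem~\ref{qversion_theorem} is an affine bijection: affineness is immediate from the linearity of the integral, and bijectivity is the content of that theorem. Since an affine bijection between convex sets carries extreme points to extreme points, it suffices to identify the extreme points on the parameter side. These are exactly the Dirac masses $\delta_x$ for $x \in \Delta_q$, a standard fact about probability measures on a compact metric space. Applying $\Phi$ then yields the measures $\nu^q_x$ as precisely the extreme $q$-exchangeable measures on $\{0,1\}^\infty$. The classical statement follows identically from the de Finetti bijection~\eqref{classicalversion}.

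For Claim~(2), I would give a direct decomposition. Fix a $q$-exchangeable measure $\mathbb{P}$ on $\{0,1\}^n$. Since each layer $C_{n,k}$ is a single $S(n)$-orbit, the defining relation~\eqref{def:fin_q_ex} pins down every ratio $\mathbb{P}(\omega)/\mathbb{P}(\omega')$ for $\omega,\omega' \in C_{n,k}$. Combined with~\eqref{inv_comput}, this forces the restriction $\mathbb{P}|_{C_{n,k}}$ to coincide with $\mathbb{P}(C_{n,k})$ times the normalized profile $e^q_k|_{C_{n,k}}$. Summing over $k$ produces the canonical decomposition
\[
\mathbb{P} = \sum_{k=0}^n \mathbb{P}(C_{n,k})\, e^q_k,
\]
which is a convex combination with nonnegative weights summing to $1$. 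Hence the $e^q_k$ span the convex set of $q$-exchangeable measures; since they have pairwise disjoint supports they are affinely independent, so each one is itself extreme. The classical case $q=1$ is identical.

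The only genuinely nontrivial ingredient is Claim~(1), which is parasitic on the Gnedin--Olshanski classification --- that theorem contains all the analytic work, and it is being used here as a black box. In contrast, Claim~(2) reduces to a bookkeeping exercise: a $q$-exchangeable measure on $\{0,1\}^n$ is determined by only $n+1$ nonnegative parameters (the layer masses), and the $e^q_k$ realize the vertices of the corresponding simplex, so no Choquet-style machinery is needed in the finite setting.
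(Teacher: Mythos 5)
Your proposal is correct and follows essentially the same route as the paper: Claim~(1) by transporting extreme points through the affine bijection of Theorem~\ref{qversion_theorem} (whose extreme points on the parameter side are the Dirac masses), and Claim~(2) by observing that $q$-exchangeability forces the layer decomposition $\mathbb{P} = \sum_{k} \mathbb{P}(C_{n,k})\, e^q_k$, with extremality of each $e^q_k$ following from the disjointness of supports. You simply spell out the details that the paper's two-line proof leaves implicit.
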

Fix \( n_1 \leqslant n \). For the extreme measure \( e^{q}_{n, n_1} \) and the measure \( \nu^{q}_{x} \) with parameter \( x = q^{n_1} \), we compute the probabilities in~\eqref{first_computation} and~\eqref{second_computation}, corresponding to the event that the first \( k \) entries of the sequence begin with exactly \( k_1 \) ones.

\begin{prop}\label{computation_prop}
\begin{equation}\label{first_computation}
(e^{q}_{n, n_1})_{k} \left(\text{s}_{k,k_{1}}\right) = q^{(n_{1} - k_{1})(k-k_{1})}\qbinom{n-k}{n_{1}-k_{1}}{} \bigg/ \qbinom{n}{n_{1}}{},
\end{equation}
\begin{equation}\label{second_computation}
(\nu^{q}_{x})_{k}  \left(\text{s}_{k,k_{1}}\right) = q^{(n_{1} - k_{1})(k-k_{1})}(q^{n_{1}}; q^{-1})_{k_{1}}.
\end{equation}
\end{prop}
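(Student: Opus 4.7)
My plan is to prove both identities by direct computation from the definitions.

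For~\eqref{first_computation}, the first step is to observe that the sequences $\omega \in C_{n, n_1}$ whose first $k$ coordinates coincide with $s_{k, k_1}$ are in bijection with $\omega' \in C_{n-k, n_1-k_1}$ via concatenation $\omega = s_{k, k_1} \cdot \omega'$; in particular the claim is vacuous when $n_1 - k_1 \notin [0, n-k]$. The second step is a clean additive decomposition of the inversion count: $\mathrm{inv}(s_{k, k_1} \cdot \omega')$ splits into (i) inversions internal to $s_{k, k_1}$, equal to $k_1(k-k_1)$; (ii) inversions internal to $\omega'$, equal to $\mathrm{inv}(\omega')$; and (iii) cross inversions, formed by a $1$ in the first block paired with a $0$ in the second, numbering $k_1\bigl((n-k) - (n_1-k_1)\bigr)$. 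Plugging this decomposition into the formula for $e^q_{n, n_1}$ factors a pure power of $q$ out of the sum, and the identity~\eqref{inv_comput} reduces the remaining $\omega'$-sum to the $q$-binomial $\qbinom{n-k}{n_1-k_1}{}$. The third step is bookkeeping: rewriting the factored $q$-exponent and collapsing it to $(n_1-k_1)(k-k_1)$, where the symmetry $\qbinom{m}{j}{q^{-1}} = q^{-j(m-j)}\qbinom{m}{j}{q}$ is the natural tool for moving between $q$- and $q^{-1}$-weighted sums if needed.

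For~\eqref{second_computation}, I would use the fact that the formula~\eqref{q-bernoulli} defines a consistent system of cylinder probabilities: its right-hand side depends only on the cylinder itself, not on the ambient length $n$. Consequently, the marginal $(\nu^q_x)_k$ is itself a $q$-Bernoulli measure on $\{0,1\}^k$ with the same parameter $x$, and applying~\eqref{q-bernoulli} with $(n, k) \mapsto (k, k_1)$ gives
\[
(\nu^q_x)_k(s_{k, k_1}) = q^{-k_1(k - k_1)} x^{k - k_1}(x; q^{-1})_{k_1}.
\]
Substituting $x = q^{n_1}$ and combining $q^{-k_1(k - k_1)} q^{n_1(k - k_1)} = q^{(n_1 - k_1)(k - k_1)}$ yields the stated expression.

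I expect the main obstacle to lie in~\eqref{first_computation}: the additive inversion decomposition itself is routine, but manipulating the resulting $q$-powers to land on the asymmetric exponent $(n_1 - k_1)(k - k_1)$ requires care. The second identity is essentially immediate once the consistency of~\eqref{q-bernoulli} under projection onto a prefix is noted.
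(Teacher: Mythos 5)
Your overall route for \eqref{first_computation} --- restrict the sum to concatenations $s_{k,k_1}\cdot\omega'$ with $\omega'\in C_{n-k,n_1-k_1}$, split the inversion statistic additively over the two blocks, and collapse the $\omega'$-sum via \eqref{inv_comput} --- is exactly the paper's, and your treatment of \eqref{second_computation} via consistency of \eqref{q-bernoulli} under projection onto a prefix is a legitimate, slightly slicker variant of the paper's ``analogous direct computation''. The gap is in the exponent bookkeeping for \eqref{first_computation}, which you rightly flag as the delicate point but do not actually resolve, and as sketched it does not close. Your decomposition of the standard statistic $\mathrm{inv}$ gives a prefix term $k_1(k-k_1)$ plus a cross term $k_1\bigl((n-k)-(n_1-k_1)\bigr)$, i.e.\ a total prefactor exponent of $k_1(n-n_1)$; this is not equal to $(n_1-k_1)(k-k_1)$ (take $k_1=k$: you get $k(n-n_1)$ versus $0$), and since the remaining $\omega'$-sum is already the plain $q$-binomial $\qbinom{n-k}{n_1-k_1}{}$ by \eqref{inv_comput}, the symmetry $\qbinom{m}{j}{q^{-1}}=q^{-j(m-j)}\qbinom{m}{j}{}$ never gets a chance to act. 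No amount of bookkeeping turns $k_1(n-n_1)$ into $(n_1-k_1)(k-k_1)$.

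The source of the discrepancy is a sign convention. The defining relation \eqref{def:q_exch} forces $\mathbb{P}(0,1)=q\,\mathbb{P}(1,0)$, so within each $C_{n,k}$ the extreme measure is proportional to $q^{-\mathrm{inv}(\omega)}$, equivalently to $q^{\mathrm{inv}(s_{n,k})-\mathrm{inv}(\omega)}$, i.e.\ to $q$ raised to the number of pairs $i<j$ with $\omega_i<\omega_j$ (co-inversions). Taken literally with $\mathrm{inv}$, the displayed formula for $e^q_k$ would give $(e^{q}_{2,1})_1(s_{1,1})=q/(1+q)$, whereas \eqref{def:q_exch} and \eqref{first_computation} both give $1/(1+q)$; so the statistic you must decompose is the co-inversion count. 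Once you do, everything is immediate: the prefix $s_{k,k_1}$ contributes $0$, the cross term is (zeros in the prefix) times (ones in $\omega'$), namely $(k-k_1)(n_1-k_1)$, and $\sum_{\omega'}q^{\#\{i<j\,:\,\omega'_i<\omega'_j\}}=\qbinom{n-k}{n_1-k_1}{}$ by reversal symmetry, landing on the stated exponent with no further manipulation. (Alternatively, keep $\mathrm{inv}$ but weight by $q^{-\mathrm{inv}}$ and normalize by $\qbinom{n}{n_1}{q^{-1}}$; then the symmetry you quote does the work.) The paper sidesteps the issue by working with ratios $e^{q}_{n,n_1}(\sigma\cdot s_{n,n_1})/e^{q}_{n,n_1}(s_{n,n_1})=q^{\mathrm{inv}(s_{n,n_1})-\mathrm{inv}(\sigma\cdot s_{n,n_1})}$, which produces the co-inversion exponent automatically.
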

\begin{proof}

We prove only~\eqref{first_computation}, the computation for~\eqref{second_computation} is analogous.
We have
\begin{equation*}
(e^{q}_{n, n_1})_{k} \left(\text{s}_{k,k_{1}}\right) = \sum_{\tilde{\omega} \in \{0,1\}^{n-k}} e^{q}_{n, n_1} \left(\text{s}_{k,k_{1}} \cup \tilde{\omega} \right),
\end{equation*}
where \( \text{s}_{k,k_{1}} \cup \tilde{\omega} \) denotes the concatenation of two sequences.
By counting inversions, we obtain
\begin{equation*}
\sum_{\tilde{\omega} \in \{0,1\}^{n-k}} e^{q}_{n, n_1} (1,\ldots,1,0,\ldots,0, \tilde{\omega}) 
=  \qbinom{n-k}{n_{1}-k_{1}}{}  e^{q}_{n, n_1} \left(\text{s}_{k, k_{1}} \cup \text{s}_{n-k, n_{1}-k_{1}}\right),
\end{equation*}
where \( \text{s}_{k,k_{1}} \cup \text{s}_{n-k,n_{1}-k_{1}} \) is the concatenation of two sequences of the same form.
The number of inversions in \( \text{s}_{k,k_{1}} \cup \text{s}_{n-k,n_{1}-k_{1}} \) equals \( (n_{1} - k_{1})(k - k_{1}) \). Therefore,
\begin{align*}
\qbinom{n-k}{n_{1}-k_{1}}{}  e^{q}_{n, n_1} \left(\text{s}_{k, k_{1}} \cup \text{s}_{n-k, n_{1}-k_{1}}\right)
&= q^{(n_{1}-k_{1})(k-k_{1})} \qbinom{n-k}{n_{1}-k_{1}}{} e^{q}_{n, n_1} \left(\text{s}_{n,n_{1}}\right) \\
&= q^{(n_{1}-k_{1})(k-k_{1})} \qbinom{n-k}{n_{1}-k_{1}}{} \bigg/ \qbinom{n}{n_{1}}{},
\end{align*}
which proves the claim.
\end{proof}
\section{Finite Form}
\subsection{Main result} In this section, we formulate an asymptotic version of Theorem~\eqref{qversion_theorem} in the sense of Theorem ~\eqref{classicalfinform}. Abusing notation, for a probability measure $\mu$ on $\Delta_q$, we denote by $\bP_{\mu,n}$ the probability measure given by
\[
\bP_{\mu,n}(A) = \int_{\Delta_q} \nu^{q}_{x}(A) \, \mu(dx), \quad A \subset \{0,1\}^{n},
\]
where $\nu^{q}_{x}$ denotes a probability measure on $\{0,1\}^{n}$ defined by~\eqref{q-bernoulli}.

\begin{theorem}\label{qfinform}
Let $\mathbb{P}$ be an $q$-exchangeable probability measure on $\{0, 1\}^{n}$. Then there exists a probability measure $\mu$ on $\Delta_{q}$ such that
    \begin{equation}
        \|\mathbb{P}_{k} - \mathbb{P}_{\mu,k}\| \leqslant c_{k} \cdot q^{n},  \quad \text{for all } k \leqslant n,
    \end{equation}
   where $c_k$ is a constant depending only on $k$.

\end{theorem}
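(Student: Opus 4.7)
The space of $q$-exchangeable probability measures on $\{0,1\}^n$ is a simplex with vertices $e^{q}_{n,0},\ldots,e^{q}_{n,n}$, so I would first decompose $\mathbb{P}=\sum_{n_1=0}^{n}\alpha_{n_1}e^{q}_{n,n_1}$ with $\alpha_{n_1}=\mathbb{P}(C_{n,n_1})\ge 0$ summing to $1$. Mirroring this decomposition on the $\Delta_q$ side, take $\mu\coloneqq\sum_{n_1=0}^{n}\alpha_{n_1}\delta_{q^{n_1}}$, so that $\mathbb{P}_{\mu,n}=\sum_{n_1}\alpha_{n_1}\nu^{q}_{q^{n_1}}$. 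By convexity of the total variation distance, the theorem reduces to the uniform vertex-wise estimate
\[
\|(e^{q}_{n,n_1})_k-(\nu^{q}_{q^{n_1}})_k\| \leqslant c_k\,q^n, \qquad n_1\in\{0,1,\ldots,n\},
\]
which after averaging against $\alpha_{n_1}$ yields the claim.

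For the vertex-wise estimate, both projections are $q$-exchangeable on $\{0,1\}^k$ and are therefore determined by their values on $s_{k,k_1}$; combined with the identity $\sum_{\omega\in C_{k,k_1}} q^{\operatorname{inv}(\omega)}=\qbinom{k}{k_1}{}$ from~\eqref{inv_comput}, this collapses the variational norm to
\[
\sum_{k_1=0}^{k}\qbinom{k}{k_1}{}\left|(e^{q}_{n,n_1})_k(s_{k,k_1})-(\nu^{q}_{q^{n_1}})_k(s_{k,k_1})\right|.
\]
Substituting the closed forms~\eqref{first_computation}--\eqref{second_computation} of Proposition~\ref{computation_prop} and expanding every $q$-integer as $(1-q^m)/(1-q)$, the inner difference factors as
\[
q^{(n_1-k_1)(k-k_1)}(q^{n_1};q^{-1})_{k_1}\left[\frac{\prod_{j=0}^{k-k_1-1}(1-q^{n-n_1-j})}{\prod_{j=0}^{k-1}(1-q^{n-j})}-1\right].
\]

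The key quantitative step is bounding the bracket uniformly in $n_1$. Assuming $n\ge k$, the denominator is bounded below by $(1-q)^k$, and the elementary inequality $|1-\prod_i(1-x_i)|\le\sum_i x_i$ for $x_i\in[0,1]$ bounds the deviations of the numerator and denominator from $1$ by geometric sums whose largest terms are $q^{n-n_1-k+k_1+1}$ and $q^{n-k+1}$. Multiplying by the prefactor $q^{(n_1-k_1)(k-k_1)}$ and performing the exponent arithmetic, one checks that the total power of $q$ is at least $n-k+1$ regardless of the value of $n_1$. Absorbing the factors $q^{1-k}$, $(1-q)^{-k}$ and $\qbinom{k}{k_1}{}$ into a $k$-dependent constant then gives the uniform bound $c_k q^n$.

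I expect the main obstacle to be the handling of the two boundary regimes where the factorization above breaks down. In the regime $k_1>n_1$, the Pochhammer factor $(q^{n_1};q^{-1})_{k_1}$ and the $q$-binomial $\qbinom{n-k}{n_1-k_1}{}$ both vanish, so both probabilities are zero. In the regime $n_1-k_1>n-k$, only $(e^{q}_{n,n_1})_k(s_{k,k_1})$ is zero, and the difference reduces to $(\nu^{q}_{q^{n_1}})_k(s_{k,k_1}) = q^{(n_1-k_1)(k-k_1)}(q^{n_1};q^{-1})_{k_1}$; the constraints $n_1-k_1\ge n-k+1$ and $k_1<k$ together force $(n_1-k_1)(k-k_1)\ge n-k+1$, so the same $q^n$-bound persists. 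Once these degenerate terms are checked by hand, summing over $k_1$ and averaging over $n_1$ completes the proof.
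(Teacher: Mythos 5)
Your proposal is correct and follows essentially the same route as the paper: the same convex decomposition into the extreme measures $e^{q}_{n,n_1}$ with $\mu=\sum\alpha_{n_1}\delta_{q^{n_1}}$, the same reduction to the vertex-wise estimate via Proposition~\ref{computation_prop} and \eqref{inv_comput}, and the same factorization of the inner difference with elementary product bounds and the denominator estimate $(1-q)^k$. Your explicit treatment of the degenerate regimes $k_1>n_1$ and $n_1-k_1>n-k$ is a welcome extra check, but the argument is otherwise the paper's.
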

\noindent The convergence rate of order~$q^{n}$ is sharp, as will be shown in Section~\ref{sec:sharp}. For convenience, we do not write the constant explicitly, only its existence is relevant for our purposes.
Using the convex structure of the space of $q$-exchangeable measures, we reduce the proof of Theorem ~\eqref{qfinform} to the case of an extreme measure.
\begin{lemma}\label{mainlemma} Fix $n_{1} \in \{0,1,\ldots,n\}$. In the notation of Theorem~\eqref{qfinform}, consider the extreme measure $\mathbb{P} = e^{q}_{n, n_{1}}$ and the probability measure $\mu = \delta_{q^{n_{1}}}$. Then
\begin{equation}\label{mainlemma_estimate}
\|\bP_{k} - \bP_{\mu,k}\| = \|(e^{q}_{n, n_{1}})_{k} -  (\nu^{q}_{q^{n_{1}}})_{k}\| \leqslant c_{k} \cdot q^{n}, \quad \text{for all } k \leqslant n,   
\end{equation}
where $c_k$ is a constant depending only on $k$.
\end{lemma}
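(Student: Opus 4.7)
The plan is to reduce the total-variation distance to a one-dimensional comparison via $q$-exchangeability, and then to extract a uniform factor of $q^n$ through a direct manipulation of $q$-Pochhammer symbols. Writing $\mathbb{P}_k := (e^q_{n,n_1})_k$ and $\mathbb{Q}_k := (\nu^q_{q^{n_1}})_k$, both measures are $q$-exchangeable on $\{0,1\}^k$, since projection onto an initial segment preserves \eqref{def:q_exch}. Consequently, on every level set $C_{k,k_1}$ they are both proportional to $q^{\mathrm{inv}(\omega)}$, so the pointwise ratio $R(k_1) := \mathbb{P}_k(\omega)/\mathbb{Q}_k(\omega)$ is independent of $\omega \in C_{k,k_1}$, and summation yields
\[
\|\mathbb{P}_k - \mathbb{Q}_k\| \;=\; \sum_{k_1=0}^{k} |R(k_1) - 1|\, \mathbb{Q}_k(C_{k,k_1}).
\]
Thus it suffices to estimate each summand uniformly in $n_1$.

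To compute $R(k_1)$ explicitly, I would use the formulas of Proposition~\ref{computation_prop} together with $(q^{n_1};q^{-1})_{k_1} = (q;q)_{n_1}/(q;q)_{n_1-k_1}$ and $\qbinom{a}{b}{} = (q;q)_a / [(q;q)_b (q;q)_{a-b}]$; direct cancellation gives the clean formula
\[
R(k_1) \;=\; \frac{(q^{\,n-n_1-k+k_1+1};q)_{k-k_1}}{(q^{\,n-k+1};q)_k}.
\]
The denominator is bounded below uniformly by $(q;q)_\infty > 0$, and the elementary estimate $|1-(q^a;q)_b| \le b\,q^a/(1-q)$ applied to both Pochhammer symbols in the numerator gives
\[
|R(k_1) - 1| \;\le\; \frac{k}{(1-q)(q;q)_\infty}\, \bigl(q^{\,n-n_1-k+k_1+1} + q^{\,n-k+1}\bigr).
\]

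For the final bound I would combine this with the explicit expression
\[
\mathbb{Q}_k(C_{k,k_1}) \;=\; q^{(n_1-2k_1)(k-k_1)} \qbinom{k}{k_1}{}(q^{n_1};q^{-1})_{k_1},
\]
using $(q^{n_1};q^{-1})_{k_1} \le 1$ and the boundedness of $\qbinom{k}{k_1}{}$ by a $k$-dependent constant. The total $q$-exponent of the first resulting summand rests on the algebraic identity
\[
(n_1-2k_1)(k-k_1) + (n-n_1-k+k_1+1) \;=\; n + n_1(k-k_1-1) - (k-k_1)(2k_1+1) + 1,
\]
whose right-hand side exceeds $n - C_k$ for a constant $C_k$ depending only on $k$, provided $k_1 \le k-1$ (so that the coefficient $k - k_1 - 1$ of $n_1$ is nonnegative); the boundary case $k_1 = k$ is trivial since the first contribution vanishes. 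The second summand is handled identically, with exponent at least $n - k + 1 - C'_k$. Summing the $k+1$ values of $k_1$ absorbs the remaining bounded factors into a single constant $c_k$, giving the desired estimate.

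The principal obstacle is uniformity over $n_1$: the naive bound $|R(k_1) - 1| = O(q^{n-n_1-k+k_1+1})$ degrades when $n_1$ is close to $n$, but the compensating factor $q^{(n_1-2k_1)(k-k_1)}$ within $\mathbb{Q}_k(C_{k,k_1})$ supplies precisely the missing powers of $q$; the algebraic identity above makes this cancellation explicit.
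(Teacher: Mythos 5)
Your proposal is essentially the same argument as the paper's: both reduce the variational distance to a sum over the level sets $C_{k,k_1}$ using $q$-exchangeability and Proposition~\ref{computation_prop}, and both then verify that the resulting $q$-exponent is at least $n$ minus a $k$-dependent constant, uniformly in $n_1$. Your organization is slightly cleaner at one point: by writing each term as $|R(k_1)-1|\,\mathbb{Q}_k(C_{k,k_1})$ and bounding the numerator of $R(k_1)-1$ via $|A-B|\le|1-A|+|1-B|$ applied to the two Pochhammer products, you avoid the paper's case split on the sign of $\prod_{i=0}^{k-k_1-1}(1-q^{n-n_1-i})-\prod_{i=0}^{k-1}(1-q^{n-i})$; your closed form $R(k_1)=(q^{\,n-n_1-k+k_1+1};q)_{k-k_1}/(q^{\,n-k+1};q)_k$ is correct, and the exponent bookkeeping that ensures uniformity in $n_1$ (the coefficient $k-k_1-1\ge 0$ of $n_1$ when $k_1<k$, with $k_1=k$ handled separately) is exactly the paper's key observation. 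One slip: your stated identity $\mathbb{Q}_k(C_{k,k_1})=q^{(n_1-2k_1)(k-k_1)}\qbinom{k}{k_1}{}(q^{n_1};q^{-1})_{k_1}$ is off by the factor $q^{-k_1(k-k_1)}$; the correct exponent is $(n_1-k_1)(k-k_1)$, as one checks from \eqref{second_computation} together with $\sum_{\omega\in C_{k,k_1}}\mathbb{Q}_k(\omega)=\qbinom{k}{k_1}{}\mathbb{Q}_k(s_{k,k_1})$ (your version can exceed $1$). This does not invalidate the proof, since your expression over-states the true value by a factor bounded by $q^{-k^2/4}$, which is absorbed into $c_k$, and the correct exponent only makes your algebraic identity easier to satisfy; but it should be stated as an inequality or corrected.
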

\noindent Note that the estimate~\eqref{mainlemma_estimate} is uniform in the parameter $n_{1}$. The proof of the lemma is given in section ~\eqref{sec:extreme}. We now apply this lemma to prove the theorem.
\begin{proof}[Proof of Theorem ~\eqref{qfinform}]
Consider a convex decomposition of the measure $\bP$
\begin{equation*}
\bP = \alpha_{0}e^{q}_{0}+\ldots+\alpha_{n}e^{q}_{n}, \quad \sum_{i=0}^{n} \alpha_{i} = 1.
\end{equation*}
Then the corresponding pushforward measure is given by
\begin{equation*}
\bP_{k} = \alpha_{0}(e^{q}_{0})_{k}+\ldots+\alpha_{n}(e^{q}_{n})_{k}.
\end{equation*}
Define the probability measure $\mu$ by setting $\mu(q^{i}):=\alpha_{i}.$
Now consider the variation distance
\begin{equation*}
\|\bP_{k} - \bP_{\mu,k} \| = \|\sum_{i=0}^{n}\alpha_{i} (e^{q}_{n, i})_{k}  - \sum_{i=0}^{n}\alpha_{i} (\nu^{q}_{x})_{k} \| \leqslant \sum_{i=0}^{n} \alpha_{i} \|(e^{q}_{n, i})_{k} -  (\nu^{q}_{x})_{k}\|.
\end{equation*}
Finally, applying Lemma ~\eqref{mainlemma}, we obtain $\|\bP_{k} - \bP_{\mu,k}\| \leqslant c_{k} \cdot q^{n}$.
\end{proof}
\subsection{Extreme case}\label{sec:extreme} 
In this section, we prove Lemma~\eqref{mainlemma}. 
\begin{proof}
The variational distance~\eqref{mainlemma_estimate}  between the corresponding pushforward measures can be computed as follows
\begin{align*}
\big\| (e^{q}_{n, n_1})_k - (\nu^{q}_{q^{n_{1}}})_k \big\| 
&= \sum_{\omega \in \{0,1\}^{k}} \left| (e^{q}_{n, n_1})_k (\omega) - (\nu^{q}_{q^{n_{1}}})_k (\omega)\right| \\
&=  \sum_{k_1 = 0}^{k}\sum_{\omega \in C_{k,k_{1}}} q^{\text{inv}(\omega)}\left| (e^{q}_{n, n_1})_k (\text{s}_{k,k_{1}}) - (\nu^{q}_{q^{n_{1}}})_k (\text{s}_{k,k_{1}})\right| \\&= \sum_{k_1 = 0}^{k} \qbinom{k}{k_1}{}
\left| (e^{q}_{n, n_1})_k (\text{s}_{k,k_{1}}) - (\nu^{q}_{q^{n_{1}}})_k (\text{s}_{k,k_{1}})\right| \\&= \sum_{k_1 = 0}^{k} \qbinom{k}{k_1}{} \, q^{(n_1 - k_1)(k - k_1)}
\left| \qbinom{n-k}{n_{1}-k_{1}}{} \bigg/ \qbinom{n}{n_{1}}{}  
- (q^{n_1}; q^{-1})_{k_1} \right|.
\end{align*}
where the second identity follows from the $q$-exchangeability property, the third from Proposition~\eqref{inv_comput}, and the fourth from Proposition~\eqref{first_computation}. It follows that it suffices to analyse the expression 
\begin{equation}\label{expression2}
 q^{(n_{1} - k_{1})(k-k_{1})}\left|\qbinom{n-k}{n_{1}-k_{1}}{} \bigg/ \qbinom{n}{n_{1}}{} - (q^{n_{1}}; q^{-1})_{k_{1}}\right| =
\end{equation}
\begin{equation*}
 q^{(n_{1} - k_{1})(k-k_{1})} \left|\dfrac{[n-k]!}{[n]!} \dfrac{[n_{1}]!}{[n_{1}-k_{1}]!}\dfrac{[n-n_{1}]! }{[n-n_{1}-(k-k_{1})]!} - (q^{n_{1}}; q^{-1})_{k_{1}}\right|.
\end{equation*}
We consider two cases: $k_1 = k$ and $k_1 < k$.

\textbf{Case 1: \( k_1 = k \).}  
In this case, \eqref{expression2} reduces to
\begin{align} \label{expression3}
\left|\dfrac{[n-k]!}{[n]!} \dfrac{[n_{1}]!}{[n_{1}-k]!}\ - (q^{n_{1}}; q^{-1})_{k}\right|  
&= \left|\dfrac{ (q^{n_{1}}; q^{-1})_{k}}{\prod_{i=0}^{k-1}(1 - q^{n - i})} - (q^{n_{1}}; q^{-1})_{k}\right| \\
&=  \nonumber(q^{n_{1}}; q^{-1})_{k} \dfrac{1 - \prod_{i=0}^{k-1}(1 - q^{n - i})}{\prod_{i=0}^{k-1}(1 - q^{n - i})},
\end{align}
since $(q^{n_{1}}; q^{-1})_{k} \leqslant 1$ and
${\prod_{i=0}^{k-1}(1 - q^{n - i})} \geqslant (1 - q)^{k}$, we obtain the estimate
\begin{equation}
 (q^{n_{1}}; q^{-1})_{k} \dfrac{1 - \prod_{i=0}^{k-1}(1 - q^{n - i})}{\prod_{i=0}^{k-1}(1 - q^{n - i})} \leqslant \dfrac{1 - \prod_{i=0}^{k-1}(1 - q^{n - i})}{(1-q)^{k}},
\end{equation} 
applying the inequality $(1 - x)(1 - y) \geq 1 - x - y$ for $x, y \geqslant 0$, we get
\begin{equation}\label{express_k=k1}
\dfrac{1 - \prod_{i=0}^{k-1}(1 - q^{n - i})}{(1-q)^{k}} \leqslant \dfrac{\sum_{i=0}^{k-1} q^{n-i}}{(1-q)^{k}} = \dfrac{\sum_{i=0}^{k-1} q^{-i}}{(1-q)^{k}} q^{n} .
\end{equation} 
Hence, the upper bound for~\eqref{expression3} is proportional to~$q^n$, with the constant depending only on~$k$.

\textbf{Case 2: \( k_1 < k \).} In this case, expression~\eqref{expression2} can be rewritten as  
\begin{align}\label{expression4}
q^{(n_{1}-k_{1})(k-k_{1})}\left(q^{n_{1}}; q^{-1}\right)_{k_{1}} \left|\dfrac{\prod_{i=0}^{k-k_{1}-1}\left(1 - q^{n- n_{1} - i}\right) - \prod_{i=0}^{k-1}\left(1 - q^{n - i}\right)}{\prod_{i=0}^{k-1}\left(1 - q^{n - i}\right)}\right|.
\end{align}  
The expression~\eqref{expression4} depends on the sign of the difference in the numerator 
\begin{equation}
\left|\prod_{i=0}^{k-k_{1}-1}\left(1 - q^{n- n_{1} - i}\right) - \prod_{i=0}^{k-1}\left(1 - q^{n - i}\right)\right|.
\end{equation}  
If \(\prod_{i=0}^{k-k_{1}-1}\left(1 - q^{n- n_{1} - i}\right) - \prod_{i=0}^{k-1}\left(1 - q^{n - i}\right) \geqslant 0\), then we have  
\begin{equation}\label{expression_nonneg}
q^{(n_{1}-k_{1})(k-k_{1})}\left(q^{n_{1}}; q^{-1}\right)_{k_{1}} \dfrac{\prod_{i=0}^{k-k_{1}-1}\left(1 - q^{n- n_{1} - i}\right) - \prod_{i=0}^{k-1}\left(1 - q^{n - i}\right)}{\prod_{i=0}^{k-1}\left(1 - q^{n - i}\right)},
\end{equation}  
since \(q^{(n_{1}-k_{1})(k-k_{1})}\leqslant 1\), \(\prod_{i=0}^{k-k_{1}-1}\left(1 - q^{n- n_{1} - i}\right) \leqslant 1\) the upper bound for~\eqref{expression_nonneg} is  
\begin{equation*}
\left(q^{n_{1}}; q^{-1}\right)_{k_{1}} \dfrac{1 - \prod_{i=0}^{k-1}\left(1 - q^{n - i}\right)}{\prod_{i=0}^{k-1}\left(1 - q^{n - i}\right)},
\end{equation*}  
applying the same inequalities as in Case~1, we estimate the entire expression by ~\eqref{express_k=k1}
\begin{equation*}
\dfrac{\sum_{i=0}^{k-1} q^{-i}}{(1-q)^{k}} q^{n}. 
\end{equation*}
In the case when \(\prod_{i=0}^{k-k_{1}-1}\left(1 - q^{n- n_{1} - i}\right) - \prod_{i=0}^{k-1}\left(1 - q^{n - i}\right) < 0\), we write  
\begin{equation}
q^{(n_{1}-k_{1})(k-k_{1})}\left(q^{n_{1}}; q^{-1}\right)_{k_{1}} \dfrac{\prod_{i=0}^{k-1}\left(1 - q^{n - i}\right) - \prod_{i=0}^{k-k_{1}-1}\left(1 - q^{n- n_{1} - i}\right)}{\prod_{i=0}^{k-1}\left(1 - q^{n - i}\right)},
\end{equation}  
since \(\prod_{i=0}^{k-1}\left(1 - q^{n - i}\right) \leqslant 1\), the upper bound becomes  
\begin{equation}
q^{(n_{1}-k_{1})(k-k_{1})}\left(q^{n_{1}}; q^{-1}\right)_{k_{1}} \dfrac{1 - \prod_{i=0}^{k-k_{1}-1}\left(1 - q^{n - n_{1} - i}\right)}{\prod_{i=0}^{k-1}\left(1 - q^{n - i}\right)},
\end{equation}  
applying the same inequalities as in Case~1, we estimate the entire expression by  
\begin{equation}
q^{(n_1 - k_1)(k - k_1)}  
\frac{\sum_{i=0}^{k - k_1 - 1} q^{n - n_1 - i}}{(1 - q)^k} = \dfrac{\sum_{i=0}^{k-k_{1}-1} q^{n + n_{1}(k-k_{1}-1) + k_{1}(k_{1} -k) - i}}{(1-q)^{k}},
\end{equation}  
since \( k - k_1 - 1 \geqslant 0 \), we obtain a uniform bound with respect to the parameter \(n_{1}\)  
\begin{equation}
\dfrac{\sum_{i=0}^{k-k_{1}-1} q^{n + n_{1}(k-k_{1}-1) + k_{1}(k_{1} -k) - i}}{(1-q)^{k}} \leqslant \dfrac{\sum_{i=0}^{k-k_{1}-1} q^{k_{1}(k_{1} -k) - i}}{(1-q)^{k}} q^{n}.
\end{equation}
In each of the two cases, the upper bound is of order \( q^n \), with the constant depending only on \( k \) and \( k_1 \).

Combining the two cases, we conclude that the overall bound is of the form $c_k \cdot q^n$, where $c_k$ is a constant depending only on~$k$. This completes the proof of the lemma.

\end{proof}

\subsection{From finite to infinite}Since the set $\Delta_{q}$ is compact, the probability measures on  $\Delta_{q}$ are uniquely determined by sequences of their moments. Therefore, injectivity of the map ~\eqref{qversion} is automatic. Using Theorem ~\eqref{qfinform}, we prove the surjectivity of the map ~\eqref{qversion}, thereby rederiving the result of Gnedin--Olshanski ~\eqref{qversion}.
\begin{corollary}
The map ~\eqref{qversion} is surjective.
\begin{proof}
Let $\mathbb{P}$ be a $q$-exchangeable probability measure on $\{0,1\}^{\infty}$. 
Consider the natural projections $\mathbb{P}_n$ onto $\{0,1\}^n$. 
From Theorem~\eqref{qfinform} we obtain a family of measures $\mu_n$. 
By compactness of the space of probability measures on $[0,1]$, we can extract subsequence $\mu_{n_i}$ that converges weakly to a probability measure $\mu$. Consequently, we obtain the weak convergence $\mathbb{P}_{\mu_{n_i},k} \xrightarrow[n_i \to \infty]{\text{weakly}} \mathbb{P}_{\mu,k}.$
Since
$
\|\mathbb{P}_{\mu_{n_i},k} - \mathbb{P}_k\| \xrightarrow[n_i \to \infty]{} 0$, we have \(\bP_{\mu,k} = \bP_{k}\)  for all $k$.
 We conclude that $\mathbb{P} = \mathbb{P}_{\mu} = \displaystyle
 \int_{\Delta_{q}}\nu^{q}_{x} \,\mu(dx)$.

\end{proof}
\end{corollary}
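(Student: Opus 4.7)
The plan is to construct $\mu$ as a weak limit of the approximating measures produced by the finite form (Theorem~\ref{qfinform}), then recover $\mathbb{P}$ from its finite-dimensional marginals; the sharp rate $q^n$ does essentially all the work. Concretely, for each $n$ the projection $\mathbb{P}_n$ of $\mathbb{P}$ onto $\{0,1\}^n$ is again $q$-exchangeable, so Theorem~\ref{qfinform} yields a probability measure $\mu_n$ on $\Delta_q$ with $\|\mathbb{P}_k - \mathbb{P}_{\mu_n, k}\| \leq c_k q^n$ for every $k \leq n$. Since $\Delta_q \subset [0,1]$ is compact, the space of Borel probability measures on $\Delta_q$ is weakly sequentially compact, so I extract a weakly convergent subsequence $\mu_{n_i} \to \mu$; compactness of $\Delta_q$ prevents any escape of mass, so $\mu$ is again a probability measure.

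Next, I transfer the weak convergence $\mu_{n_i} \to \mu$ to convergence of the finite-dimensional pushforwards. For each fixed $k$ and each cylinder $\omega \in \{0,1\}^k$, formula~\eqref{q-bernoulli} shows that $x \mapsto \nu_x^q(\omega)$ is polynomial, hence continuous, on $\Delta_q$, and therefore
\begin{equation*}
\mathbb{P}_{\mu_{n_i},k}(\omega) = \int_{\Delta_q} \nu_x^q(\omega) \, \mu_{n_i}(dx) \longrightarrow \int_{\Delta_q} \nu_x^q(\omega) \, \mu(dx) = \mathbb{P}_{\mu,k}(\omega).
\end{equation*}
On the other hand, Theorem~\ref{qfinform} gives $\|\mathbb{P}_{\mu_{n_i},k} - \mathbb{P}_k\| \leq c_k q^{n_i} \to 0$. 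Comparing the two limits yields $\mathbb{P}_{\mu,k} = \mathbb{P}_k$ for every $k$, and by Kolmogorov consistency (equality on all finite-dimensional projections determines the law on $\{0,1\}^\infty$) this lifts to $\mathbb{P} = \int_{\Delta_q} \nu_x^q\, \mu(dx)$.

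I do not anticipate a genuine technical obstacle: every step is driven by compactness of $\Delta_q$ together with the already-established rate, and the exponential decay $q^n$ is overkill for what is needed here. The only mild point worth noting is that once the subsequence $\mu_{n_i}$ is fixed, the estimate of Theorem~\ref{qfinform} is available for every $k$ as soon as $n_i \geq k$, so a single subsequence serves all $k$ simultaneously and no diagonal argument is required.
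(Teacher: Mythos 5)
Your proposal is correct and follows essentially the same route as the paper's own proof: apply Theorem~\ref{qfinform} to each projection $\mathbb{P}_n$, extract a weakly convergent subsequence $\mu_{n_i}\to\mu$ by compactness of the space of probability measures on $\Delta_q$, and compare the two limits of $\mathbb{P}_{\mu_{n_i},k}$ to conclude $\mathbb{P}_{\mu,k}=\mathbb{P}_k$ for all $k$. The only difference is that you spell out the continuity of $x\mapsto\nu^q_x(\omega)$ and the consistency step, which the paper leaves implicit.
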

\subsection{The rate is sharp}\label{sec:sharp}

We provide an example in which the lower bound for the variational distance in Theorem~\eqref{qfinform} is of order~$q^n$, confirming that this rate is optimal. The example is given by the extreme measure \( e^{q}_{n, n_1} \) and the measure \( \nu^{q}_{x} \) with parameter \( x = q^{n_1} \).
We begin by proving a technical lemma.

\begin{lemma}\label{example_lemma}
\begin{equation}
\dfrac{1 - \prod_{i=0}^{k-1}(1 - q^{n - i})}{\prod_{i=0}^{k-1}(1 - q^{n - i})} \geqslant \dfrac{q^{1-k}-q}{1-q} q^{n}.
\end{equation}
\end{lemma}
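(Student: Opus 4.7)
The goal is to bound $(1-P)/P$ from below, where $P = \prod_{i=0}^{k-1}(1-q^{n-i})$. The first thing to notice is that the naive approach — bounding $1-P$ directly via $\prod(1-a_i) \geq 1 - \sum a_i$ — gives $1-P \leq \sum q^{n-i}$, which goes in the \emph{wrong} direction. So we must exploit the denominator $P$ in the numerator to turn the inequality around.

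The plan is to rewrite
\[
\frac{1-P}{P} \;=\; \frac{1}{P} - 1 \;=\; \prod_{i=0}^{k-1}\frac{1}{1-q^{n-i}} \;-\; 1,
\]
and then apply the elementary inequality $\dfrac{1}{1-a} \geq 1+a$ for $a \in [0,1)$ coordinate-by-coordinate. This gives
\[
\prod_{i=0}^{k-1}\frac{1}{1-q^{n-i}} \;\geq\; \prod_{i=0}^{k-1}\bigl(1+q^{n-i}\bigr) \;\geq\; 1 \;+\; \sum_{i=0}^{k-1} q^{n-i},
\]
where the second inequality comes from expanding the product and discarding all nonnegative cross-terms. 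Subtracting $1$ yields
\[
\frac{1-P}{P} \;\geq\; \sum_{i=0}^{k-1} q^{n-i}.
\]

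It then remains to identify this sum with the right-hand side of the claim. A direct geometric-series computation gives
\[
\sum_{i=0}^{k-1} q^{n-i} \;=\; q^{n} \sum_{i=0}^{k-1} q^{-i} \;=\; q^{n}\cdot\frac{q^{1-k}-q}{1-q},
\]
which matches the stated bound exactly and completes the proof. There is no real obstacle here: the only conceptual point is that one must invert $P$ before estimating, since the direction of the standard Bernoulli-type inequality is opposite to what is needed. Everything else is a one-line geometric series calculation.
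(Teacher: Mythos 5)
Your proof is correct and follows essentially the same route as the paper: both invert $P$ first and reduce the claim to the lower bound $\frac{1-P}{P} \geqslant \sum_{i=0}^{k-1} q^{n-i}$, followed by the same geometric-series computation. The only difference is the elementary inequality used in the middle step --- you use $\frac{1}{1-a} \geqslant 1+a$ together with $\prod(1+a_i) \geqslant 1+\sum a_i$, whereas the paper passes through $P \leqslant \exp\left(-\sum q^{n-i}\right)$ and $e^{x}-1 \geqslant x$; both are equally valid.
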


\begin{proof}
Since $\ln(1 - x) \leqslant -x$ for $x \in (0,1)$, we have
\begin{equation}
\prod_{i=0}^{k-1}(1 - q^{n - i}) \leqslant \exp\left(-\sum_{i=0}^{k-1}q^{n-i}\right),
\end{equation}
therefore,
\begin{equation}
\dfrac{1 - \prod_{i=0}^{k-1}(1 - q^{n - i})}{\prod_{i=0}^{k-1}(1 - q^{n - i})} = \dfrac{1}{\prod_{i=0}^{k-1}(1 - q^{n - i})} - 1 \geqslant \exp\left(\sum_{i=0}^{k-1}q^{n-i}\right) - 1.
\end{equation}
Since $\exp(x) - 1 \geqslant x$ for $x \geqslant 0$, it follows that
\begin{equation}
\dfrac{1 - \prod_{i=0}^{k-1}(1 - q^{n - i})}{\prod_{i=0}^{k-1}(1 - q^{n - i})} \geqslant \sum_{i=0}^{k-1}q^{n-i} = \dfrac{q^{1-k}-q}{1-q} q^{n}.
\end{equation}
\end{proof}
\begin{prop}
For $n_{1} \geqslant k$, we have 
\begin{equation}
\big\| (e^{q}_{n, n_1})_k - (\nu^{q}_{q^{n_{1}}})_k \big\| \geqslant  \tilde{c}_{k}\cdot q^{n},
\end{equation}
where $\tilde{c}_{k}$ is a constant depending only on $k$.
\end{prop}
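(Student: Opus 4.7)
The plan is to lower-bound the variational distance by a single well-chosen summand in the expansion already produced while proving Lemma~\eqref{mainlemma}. That proof rewrites
\begin{equation*}
\big\|(e^{q}_{n, n_1})_k - (\nu^{q}_{q^{n_1}})_k\big\| = \sum_{k_1 = 0}^{k} \qbinom{k}{k_1}{}\, q^{(n_1-k_1)(k-k_1)} \left|\qbinom{n-k}{n_{1}-k_{1}}{}\bigg/\qbinom{n}{n_{1}}{} - (q^{n_{1}}; q^{-1})_{k_{1}}\right|
\end{equation*}
as a sum of non-negative terms, so I am free to discard all but one of them. I would keep only the term $k_1 = k$; this is precisely the choice treated in Case~1 of the proof of Lemma~\eqref{mainlemma}, and it is available because the assumption $n_{1} \geqslant k$ makes the corresponding binomial coefficient well-defined.

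For that single term the prefactor $\qbinom{k}{k}{}\, q^{(n_1-k)(k-k)}$ collapses to $1$, and Case~1 of Lemma~\eqref{mainlemma} already identifies the remaining absolute difference as
\begin{equation*}
(q^{n_{1}}; q^{-1})_{k}\, \frac{1 - \prod_{i=0}^{k-1}(1 - q^{n - i})}{\prod_{i=0}^{k-1}(1 - q^{n - i})}.
\end{equation*}
I would then apply Lemma~\eqref{example_lemma} directly to the fraction, replacing it by the lower bound $\frac{q^{1-k}-q}{1-q}\, q^{n}$, which immediately produces the required $q^{n}$ scaling.

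The last step is to extract a lower bound on the factor $(q^{n_{1}}; q^{-1})_{k} = \prod_{i=0}^{k-1}(1 - q^{n_{1} - i})$ that is uniform in $n_{1}$. This is exactly where the hypothesis $n_{1} \geqslant k$ is used: it forces $n_{1} - i \geqslant 1$ for every $0 \leqslant i \leqslant k-1$, hence $q^{n_{1}-i} \leqslant q$, hence each factor is at least $1-q$, giving $(q^{n_{1}}; q^{-1})_{k} \geqslant (1-q)^{k}$. Combining the two bounds yields the proposition with the explicit constant $\tilde{c}_{k} = (1-q)^{k-1}(q^{1-k}-q) > 0$.

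I do not expect any real obstacle: the analytic content is packaged in Lemma~\eqref{example_lemma}, and the only non-trivial point is the selection of the $k_{1}=k$ summand — this is what allows the $n_{1}$-dependence of $(q^{n_{1}}; q^{-1})_{k_{1}}$ to be neutralized uniformly under the hypothesis $n_{1} \geqslant k$, since the other summands $k_{1} < k$ carry prefactors $q^{(n_{1}-k_{1})(k-k_{1})}$ that may decay with $n_{1}$ and therefore are of no use for a lower bound.
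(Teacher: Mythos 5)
Your proposal is correct and follows essentially the same route as the paper: both isolate the $k_{1}=k$ summand, apply Lemma~\ref{example_lemma} to the resulting fraction, and use $n_{1}\geqslant k$ to bound $(q^{n_{1}};q^{-1})_{k}\geqslant(1-q)^{k}$ uniformly, yielding the same constant $\tilde{c}_{k}=(1-q)^{k-1}(q^{1-k}-q)$.
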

\begin{proof}
As we have already shown, the variational distance between  \( e^{q}_{n, n_1} \) and \( \nu^{q}_{x} \) can be computed using the following formula
\begin{equation*}
\big\| (e^{q}_{n, n_1})_k - (\nu^{q}_{q^{n_{1}}})_k \big\| 
= \sum_{k_1 = 0}^{k} \qbinom{k}{k_1}{} \, q^{(n_1 - k_1)(k - k_1)}
\left| \qbinom{n-k}{n_{1}-k_{1}}{} \bigg/ \qbinom{n}{n_{1}}{}  
- (q^{n_1}; q^{-1})_{k_1} \right|.
\end{equation*}
Since $n_{1} \geqslant k$, we have $(q^{n_{1}}; q^{-1})_{k} \neq 0$. To obtain a lower bound, we consider only the term corresponding to $k_1 = k$ in the sum.
\begin{equation*}
\big\| (e^{q}_{n, n_1})_k - (\nu^{q}_{q^{n_{1}}})_k \big\|  \geqslant  (q^{n_{1}}; q^{-1})_{k}  \dfrac{1 - \prod_{i=0}^{k-1}(1 - q^{n - i})}{\prod_{i=0}^{k-1}(1 - q^{n - i})}.
\end{equation*}
Using the inequality $(q^{n_{1}}; q^{-1})_{k} \geqslant (1-q)^{k}$ and Lemma~\eqref{example_lemma}, we obtain
\begin{equation}
(q^{n_{1}}; q^{-1})_{k} \dfrac{1 - \prod_{i=0}^{k-1}(1 - q^{n - i})}{\prod_{i=0}^{k-1}(1 - q^{n - i})} \geqslant (1-q)^{k}  \dfrac{q^{1-k}-q}{1-q} q^{n}.
\end{equation}
Thus, we see that the lower bound is of order~$q^{n}$.
\end{proof}

\bibliographystyle{alpha}
\bibliography{bibliography}
\end{document}